\numberwithin{equation}{subsection}
\newtheorem{propo}{Proposition}[section]
\newtheorem{theor}[propo]{Theorem}
\newtheorem{lemma}[propo]{Lemma}
\theoremstyle{definition}
\theoremstyle{remark}
\let\oldmarginpar\marginpar
\renewcommand\marginpar[1]{\oldmarginpar{\footnotesize #1}}
\newcommand{\ZZ}{\mathbb{Z}}
\begin{document}

\title[Multi-racks]{Multi-quandles of topological pairs}

    \author[Vladimir Turaev]{Vladimir Turaev}
    \address{
    Vladimir Turaev \newline
    \indent   Department of Mathematics \newline
    \indent  Indiana University \newline
    \indent Bloomington IN47405, USA\newline
    \indent and  \newline
    \indent IRMA, Strasbourg \newline
    \indent 7 rue Rene Descartes \newline
    \indent 67084 Strasbourg, France \newline
    \indent $\mathtt{vturaev@yahoo.com}$}

\begin{abstract}  In generalization of knot quandles we introduce  similar algebraic structures associated with  arbitrary pairs  consisting of a  path-connected topological space and its path-connected subspace.
\end{abstract}

\maketitle

\section {Introduction}

A quandle is a set with a binary operation whose axioms are inspired by the Reidemeister moves on knot diagrams. Every knot in the 3-sphere gives rise to a quandle which can be defined  in terms of    diagrams of the knot or in terms of homotopy classes of paths in the knot exterior. The study of  quandles has been an active  part of knot theory  since the fundamental papers of Joyce \cite{Jo} and Matveev \cite{Ma}, see  a few sample papers on this subject in the list of references.

This paper was inspired by the following  question: can one  define analogs of knot quandles for arbitrary topological pairs? More precisely, consider a pair $(X,Y \subset X)$ of path-connected topological  spaces. Pick a base point $x\in X$. By a \emph{$Y$-path} we shall mean a continuous path in~$X$ starting in~$x$ and terminating in~$Y$. By \emph{$Y$-homotopy} (or just \emph{homotopy}) of $Y$-paths we mean a continuous deformation in the class of $Y$-paths.  During such a deformation, the starting point of the path does not move while the terminal point  may slide along~$Y$ in an arbitrary way. Consider the set $\pi(X,Y,x)$ of the homotopy classes of   $Y$-paths.  We ask
whether this set carries a natural quandle-type  structure.

The answer turns out to be positive with a few caveats.  First, instead of quandles we involve more general algebraic structures which we call multi-quandles. Note that the multi-quandles of  the pair $(X,Y)$ corresponding to different choices of the base point $x\in X$ are  isomorphic.  Second, the  isomorphism class of these multi-quandles  is entirely determined by the fundamental groups of $X,Y$ and the inclusion homomorphism $\pi_1(Y)\to \pi_1(X)$.  This, one can say, is a disappointment as we get no really new invariants of the pair $X, Y$. Third, in this general setting we do not have knot diagrams which makes it impossible to apply diagrammatic techniques  used  in the theory of knot quandles. Nevertheless, the study of multi-quandles suggests a number of interesting  questions. Presumably,  various algebraic methods developed for quandles can be extended to multi-quandles.

The paper starts with the  definition   of so-called  multi-racks. Then we define multi-quandles, discuss    a few  constructions of multi-quandles, and define multi-quandles of topological pairs. 

Recently, algebraic objects analogous to multi-racks and multi-quandles were  independently introduced by  V. G. Bardakov and D. A. Fedoseev, see \cite{BF}.


\section{Multi-racks and multi-quandles}\label{brbrr}

We  define multi-racks, multi-quandles and give a few simple examples.
 
 \subsection{Basics}\label{topolsssett}\label{topddolsXett} By  a \emph{binary operation} in a set~$U$  we  mean an arbitrary mapping $\rhd: U\times U \to U$. The image of a pair $(u,v)\in U\times U$  under this mapping is denoted $u\rhd v$.  A  binary operation~$\rhd$  in~$U$ is \emph{non-degenerate} if for any $v\in U$, the map $U\to U, u\mapsto u\rhd v$ is bijective.  A \emph{multi-rack} is a pair consisting of a non-empty set~$U$ and a collection $\{\rhd_s\}_{s\in  S}$ of  non-degenerate  binary operations in~$ U$ such that for any $s ,t\in S, u,v,w\in U$ we have    
 \begin{equation}\label{Jaco}(u \rhd_s v)\rhd_t w=(u \rhd_t w)\rhd_s (v\rhd_t w) \end{equation} and
 $$(u \rhd_t v)\rhd_s w=(u \rhd_s w)\rhd_t (v\rhd_sw) .$$
 In particular,  for all $s\in S$ and   $u,v,w \in U$ we must  have
 \begin{equation}\label{mleftJaco} (u \rhd_s v)\rhd_s w=(u\rhd_s w)\rhd_s (v\rhd_s w).\end{equation}
We call~$U$ the \emph{underlying set} of the multi-rack $(U, \{\rhd_s\}_{s\in  S})$  and call~$S$ the set of binary operations in this multi-rack. Of course, if~$T$ is a subset of~$S$, then $(U, \{\rhd_s\}_{s\in  T})$ is also a multi-rack. 
 
 A \emph{morphism} of  multi-racks  $(U, \{\rhd_s\}_{s\in  S}) \to (V, \{\rhd_t\}_{t\in  T})$ is a pair of maps $\Phi:U\to V, \varphi:S \to T$ such that for any $u,v\in U, s\in S$ we have
$$\Phi(u) \rhd_{\varphi(s)} \Phi (v)=\Phi (u\rhd_s v).$$
 Multi-racks and their morphisms form a category with respect to the obvious composition of  morphisms and  the obvious identity morphisms.

A multi-rack $(U, \{\rhd_s\}_{s\in  S})$ is a \emph{multi-quandle} if $u\rhd_s u =u$ for all $u\in U, s\in S$. A multi-rack (respectively, a multi-quandle) $(U, \{\rhd_s\}_{s\in  S})$  with $\rm{card} (U)=1$ is a \emph{rack} (respectively, a \emph{quandle}).

 \subsection{Examples}\label{exa} 1.  Any non-empty set~$U$ with the binary operation $u\rhd v=u$ for all $u,v\in U$ is a quandle.
 
 2.  Consider a group~$G$ and a  family of commuting automorphisms $\{f_s: G\to G\}_{s\in S}$ of~$G$. For  $s\in S$, define a map $\rhd_s:G\times G \to G$ by   $$u \rhd_s v=f_s(uv^{-1})v$$ for all $u,v\in G$. Since $f_s$ is an automorphism of~$G$, the binary operation $\rhd_s$ is non-degenerate. The identity $u\rhd_s u=u$ is obvious.
 Direct computations show that for any $s, t\in S, u,v,w\in G$, 
 $$(u\rhd_s v)\rhd_{t}w=f_s(uv^{-1})v \rhd_{t}w=f_t (f_s(uv^{-1})vw^{-1} )w$$
 and $$(u\rhd_{t} w)\rhd_s (v\rhd_{t} w)=f_t(uw^{-1})w\rhd_s f_t(vw^{-1})w$$
 $$=f_s(f_t(uw^{-1})w( f_t(vw^{-1})w)^{-1})  f_t(vw^{-1})w $$
 $$=f_s(f_t(uv^{-1}))f_t(vw^{-1} )w=f_t (f_s(uv^{-1})vw^{-1} )w$$
 where we use that $f_{s},f_{t}$ are commuting automorphisms of~$G$. Hence, the pair $(G, \{\rhd_s\}_{s\in S})$ is a multi-quandle.  To give a  specific example, we can take as~$G$ a module over a commutative ring and as $\{f_s\}_{s\in S}$  multiplications by  invertible elements of this ring. 
 
 3. Consider a group~$G$  and for each integer~$n$ define a  binary operations $\rhd_n$ in~$G$ as follows.   For any $u,v \in G$,    set
 \begin{equation}\label{ope} u \rhd_{n} v= v^{-n} u v^n\in G. \end{equation}
 Direct computations show that the pair $(G, \{\rhd_n\}_{n\in \ZZ})$ is a  multi-quandle.
 
 


  \section{Constructions of multi-racks and multi-quandles}\label{gtmrrbrr}
 
  The following  constructions of multi-racks and multi-quandles play the key role in the definition of multi-quandles of topological pairs.

 \subsection{Multi-racks}\label{gmrexa}  
  For any  group~$G$ we define a multi-rack having~$G$ as  both the underlying    set and   the set of binary operations.  Namely, for any $u, v, s\in G$,   set
 \begin{equation}\label{ope}u \rhd_s v= vsv^{-1} u \in G. \end{equation}

  
  \begin{lemma}\label{sDDfffDtrbucte}    The pair $(G, \{\rhd_s\}_{s\in G})$ is a  multi-rack.
   \end{lemma}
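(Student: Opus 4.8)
The plan is to verify the two defining conditions of a multi-rack directly from the formula $u\rhd_s v=vsv^{-1}u$. Non-degeneracy is immediate: for fixed $v,s\in G$, the map $u\mapsto u\rhd_s v=vsv^{-1}u$ is left translation of $G$ by the fixed element $vsv^{-1}\in G$, hence a bijection of $G$. For the two Jacobi-type identities in the definition, I would first observe that they are interchanged by transposing the symbols $s$ and $t$; hence it suffices to prove the first one, $(u\rhd_s v)\rhd_t w=(u\rhd_t w)\rhd_s(v\rhd_t w)$, which is the identity \eqref{Jaco}. The special case $s=t$ then recovers \eqref{mleftJaco} at no extra cost.

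For this identity I would expand both sides using the definition of $\rhd$. The left side unwinds in a single step: $(u\rhd_s v)\rhd_t w=wtw^{-1}\,(vsv^{-1}u)=wtw^{-1}vsv^{-1}u$. For the right side, abbreviate $p:=u\rhd_t w=wtw^{-1}u$ and $q:=v\rhd_t w=wtw^{-1}v$, so that $(u\rhd_t w)\rhd_s(v\rhd_t w)=p\rhd_s q=q\,s\,q^{-1}\,p$. The key point is that $p$ and $q$ share the conjugating prefix $a:=wtw^{-1}$, so that $q^{-1}p=(av)^{-1}(au)=v^{-1}a^{-1}au=v^{-1}u$; therefore $q\,s\,q^{-1}\,p=(av)\,s\,(v^{-1}u)=a\,(vsv^{-1})\,u=wtw^{-1}vsv^{-1}u$, which equals the left side. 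This proves the first Jacobi-type identity, and by the symmetry noted above the second one, so $(G,\{\rhd_s\}_{s\in G})$ is a multi-rack.

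I do not anticipate a genuine obstacle: the only thing requiring attention is the bookkeeping of the conjugating factors and their inverses, and the argument uses nothing about $G$ beyond the group axioms — in particular no commutativity, unlike the earlier group examples. It is worth recording why the conclusion is only ``multi-rack'': taking $v=u$ in the definition gives $u\rhd_s u=usu^{-1}u=us$, which differs from $u$ whenever $s$ is not the unit of~$G$, so $(G,\{\rhd_s\}_{s\in G})$ is typically not a multi-quandle.
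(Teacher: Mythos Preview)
Your proof is correct and follows essentially the same approach as the paper: both argue non-degeneracy is immediate (you spell out that it is left translation by $vsv^{-1}$) and then verify \eqref{Jaco} by expanding both sides directly in the group, with the same cancellation of the shared conjugating prefix $wtw^{-1}$. Your observation that the second Jacobi-type identity follows from the first by interchanging $s$ and $t$, and your closing remark that $u\rhd_s u=us\neq u$ for $s\neq 1$, are also in agreement with the paper (the latter is stated just after the lemma).
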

  
\begin{proof} The non-degeneracy of $\rhd_s$ is obvious. For any $u,v, w, s, t\in G $, we have
$$(u \rhd_s v)\rhd_{t} w=vsv^{-1} u\rhd_{t} w= wt w^{-1} vsv^{-1} u$$
and $$(u \rhd_{t} w)\rhd_s (v\rhd_{t} w)=wtw^{-1} u \rhd_s
wtw^{-1} v=$$ 
$$=wtw^{-1} v s (wtw^{-1} v)^{-1} wtw^{-1} u=wt w^{-1} vsv^{-1} u.$$ 
This proves  \eqref{Jaco}.
  \end{proof}

 Note that if  $G\neq\{1\}$ then the multi-rack  in Lemma~\ref{sDDfffDtrbucte} is not a multi-quandle: for any $s\in G\setminus \{1\}$ we have $s\rhd_s s=s^2 \neq s$.

  \subsection{Multi-quandles}\label{cosetgmrexa} Every pair (a group~$G$, a subgroup $H\subset G$) gives rise to a multi-quandle as follows.    
Let $G/H$ be the set of  left cosets of~$H$ in~$G$ and let $p=p_H:G\to G/H$ be the  projection 
  carrying any $u\in G$ to the left coset $uH$. 
  Let  $\{\rhd_s\}_{s\in G}$  be the  binary operations    \eqref{ope}  in~$G$.

    \begin{lemma}\label{sDucte}    For any element~$s$ of the center of~$H$,   there is a unique  binary operation  $\rhd^\circ_s$ in $G/H$ such that the diagram
\begin{equation}\label{dia1}
\xymatrix{
G\times G \ar[rr]^-{\rhd_s} \ar[d]_{{ p}  \times   p} & & G\ar[d]^-{{  p}}\\
{G/H\times G/H} \ar[rr]^-{\rhd^\circ_s}&  & G/H
} \end{equation}
commutes. 
    \end{lemma}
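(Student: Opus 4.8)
The plan is to show that $\rhd_s$ descends to $G/H$, i.e.\ that $p\times p$ is compatible with the target projection $p$. Since $p\times p$ is surjective, uniqueness of $\rhd^\circ_s$ is automatic once existence is established: any two binary operations on $G/H$ making the diagram commute agree on all pairs in the image of $p\times p$, which is everything. So the entire content is the existence claim, and for that it suffices to verify that $\rhd_s$ is \emph{constant on the fibers} of $p\times p$; that is, whenever $u'\in uH$ and $v'\in vH$ we must have $(u'\rhd_s v')H = (u\rhd_s v)H$. Given that, one simply \emph{defines} $\rhd^\circ_s(uH,vH) := (u\rhd_s v)H$ and the square commutes by construction.

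So the key step is the following computation. Write $u' = uh$ and $v' = vk$ with $h,k\in H$. Using \eqref{ope},
\[
u'\rhd_s v' = v's(v')^{-1}u' = vk\,s\,k^{-1}v^{-1}\,uh.
\]
Now invoke the hypothesis that $s$ lies in the center of $H$: since $k\in H$, we have $ksk^{-1} = s$, so the expression collapses to $vsv^{-1}uh = (u\rhd_s v)h$. As $h\in H$, this shows $u'\rhd_s v'$ and $u\rhd_s v$ lie in the same left coset of $H$, which is exactly the required fiber-constancy. Note that the factor $k$ disappears entirely (so even without centrality the second coordinate would be harmless), whereas the factor $h$ survives but only on the right, landing inside $H$ — it is the right-multiplication structure of left cosets that makes this work.

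The one point deserving care — and the closest thing to an obstacle — is being precise about \emph{why} centrality is the exact condition needed, and not asking for more. The term $ksk^{-1}$ must be absorbed, and $k$ ranges over all of $H$ as $v'$ ranges over $vH$; requiring $ksk^{-1}=s$ for all $k\in H$ is precisely the statement $s\in Z(H)$. One should also observe that the resulting $\rhd^\circ_s$ is automatically non-degenerate (the inverse of $w\mapsto w\rhd_s v$ descends similarly) and that $uH \rhd^\circ_s uH = (u\rhd_s u)H = (usu^{-1}u)H = (us)H = uH$ when $s\in H$, so in fact one gets a multi-quandle structure — though the lemma as stated only asks for the descent of the operation, so I would confine the proof to the fiber-constancy computation above and the one-line uniqueness remark.
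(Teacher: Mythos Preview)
Your proof is correct and follows exactly the paper's approach: the same fiber-constancy computation (with $h,k$ in place of the paper's $g,h$), followed by the explicit definition $uH \rhd^\circ_s vH := (u\rhd_s v)H$; your treatment of uniqueness via surjectivity of $p\times p$ is a bit more explicit than the paper's, but the content is identical. One small slip: your parenthetical ``so even without centrality the second coordinate would be harmless'' is backwards---the factor $k$ disappears \emph{only} because $ksk^{-1}=s$, which is precisely the centrality hypothesis; without it the expression $v(ksk^{-1})v^{-1}uh$ need not lie in $(u\rhd_s v)H$.
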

    
    \begin{proof} Pick any $u,v\in G$ and  $g,h \in H$.
    By definition,
   $$ ug\rhd_s vh = vh s (vh)^{-1}ug=vh sh^{-1} v^{-1}ug=vsv^{-1} ug=(u\rhd_s v)g$$ where we use 
   that $hs=sh$. Consequently,  $$  p(ug\rhd_s vh)=p(u\rhd_s v).$$   
   Now it is clear that the formula  $$uH\rhd^\circ_s vH= (u\rhd_s v)H$$ defines the  unique binary operation $\rhd^\circ_s$ in~$G/H$ satisfying the conditions of the lemma. \end{proof}

   We shall denote the center of a group~$H$ by $Z(H)$.

      \begin{theor}\label{sDuctet}    Under the assumptions of Lemma~\ref{sDucte}  the   pair
      $$(G/H, \{\rhd^\circ_s\}_{s\in Z(H)}\}$$ is a  multi-quandle. \end{theor}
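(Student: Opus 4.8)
The plan is to deduce everything from the multi-rack $(G,\{\rhd_s\}_{s\in G})$ of Lemma~\ref{sDDfffDtrbucte} by pushing identities forward along the projection $p\colon G\to G/H$ and using the commutativity of the square~\eqref{dia1}. Spelled out, Lemma~\ref{sDucte} gives the single relation
$$p(u)\rhd^\circ_s p(v)=p(u\rhd_s v)\qquad\text{for all }u,v\in G,\ s\in Z(H),$$
and this relation, together with the surjectivity of $p$, will reduce each multi-quandle axiom for $(G/H,\{\rhd^\circ_s\}_{s\in Z(H)})$ to the already-established corresponding statement in $G$.

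First I would check that each $\rhd^\circ_s$ with $s\in Z(H)$ is non-degenerate. Fix $v\in G$. Since $u\rhd_s v=vsv^{-1}u$, the relation above gives $uH\rhd^\circ_s vH=(vsv^{-1})\cdot uH$, so the map $G/H\to G/H$, $uH\mapsto uH\rhd^\circ_s vH$, is left translation of $G/H$ by the fixed element $vsv^{-1}\in G$, hence a bijection. (Equivalently: $u\mapsto u\rhd_s v$ is a bijection of $G$ commuting with the right $H$-action, so it descends to a bijection of $G/H$.)

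Next, the two ``Jacobi'' identities. For $s,t\in Z(H)$ and any $u,v,w\in G$, two applications of the displayed relation yield
\begin{align*}
\big(p(u)\rhd^\circ_s p(v)\big)\rhd^\circ_t p(w)&=p\big((u\rhd_s v)\rhd_t w\big),\\
\big(p(u)\rhd^\circ_t p(w)\big)\rhd^\circ_s\big(p(v)\rhd^\circ_t p(w)\big)&=p\big((u\rhd_t w)\rhd_s(v\rhd_t w)\big).
\end{align*}
By~\eqref{Jaco} for the multi-rack $(G,\{\rhd_s\}_{s\in G})$ the two arguments of $p$ on the right coincide, hence so do the two left-hand sides; since $p$ is surjective, every triple in $G/H$ is of the form $(p(u),p(v),p(w))$, so this is exactly~\eqref{Jaco} for $\{\rhd^\circ_s\}$. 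The companion identity follows in the same way from the companion of~\eqref{Jaco} in $G$. Thus $(G/H,\{\rhd^\circ_s\}_{s\in Z(H)})$ is a multi-rack.

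Finally, the multi-quandle axiom $uH\rhd^\circ_s uH=uH$. This is where the hypothesis $s\in Z(H)$, rather than $s\in G$, is used, and the key point is simply that $Z(H)\subseteq H$: for $s\in Z(H)$ we get $u\rhd_s u=usu^{-1}u=us\in uH$, so $uH\rhd^\circ_s uH=(u\rhd_s u)H=uH$. This removes precisely the obstruction recorded after Lemma~\ref{sDDfffDtrbucte}. I do not expect any real difficulty here; the whole argument is a diagram chase, the only mild subtlety being to notice that $vsv^{-1}$ is independent of the choice of representative $v$ of $vH$ (again because $s$ centralizes $H$), which is what makes the non-degeneracy step go through cleanly.
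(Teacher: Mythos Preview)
Your proposal is correct and follows essentially the same approach as the paper: push the multi-rack identities from $G$ down to $G/H$ via Lemma~\ref{sDucte} and the surjectivity of~$p$, then use $s\in H$ for the quandle axiom. Your non-degeneracy step is in fact slightly slicker than the paper's---the paper verifies surjectivity and injectivity of $x\mapsto x\rhd^\circ_s p(v)$ separately by lifting and chasing cosets, whereas you observe directly that the map is left translation by $vsv^{-1}$ and hence a bijection.
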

   
   \begin{proof} Pick   $s,t\in Z(H)$ and $u,v,w\in G$.  Lemma~\ref{sDucte} implies that
   $$p((u\rhd_sv)\rhd_t w)=p(u\rhd_s v)\rhd^\circ_t p(w)=(p(u)\rhd^\circ_s p(v))\rhd^\circ_t p(w).
$$ 
Similarly, $$
p((u \rhd_t w)\rhd_s (v\rhd_t w))= (p(u) \rhd_t^\circ p(w))\rhd_s^\circ (p(v)\rhd_t^\circ  p(w)) . $$   
Now, the equality \eqref{Jaco} in~$G$ implies that $$(p(u)\rhd^\circ_s p(v))\rhd^\circ_t p(w)=
(p(u) \rhd_t^\circ p(w))\rhd_s^\circ (p(v)\rhd_t^\circ  p(w)) . $$ Since the projection $p:G\to G/H$ is surjective, we conclude that the equality \eqref{Jaco} holds for all elements of $G/H$ and all $s,t\in Z(H)$.

Next we observe that 
$$p(u) \rhd_s^\circ p(u)=p(u\rhd_s u)=p (usu^{-1}u)=p(us)=p(u)$$
where the last equality holds because $s\in H$. Thus, $p(u) \rhd_s^\circ p(u)=p(u)$. 
The surjectivity of~$p$ implies that $x\rhd^\circ_s x=x$ for all $x\in G/H$. 

It remains to prove the non-degeneracy of the operations $\{\rhd^\circ_s\}_s$ in $G/H$. We must show that for any $s\in Z(H), v\in G$, the  map $f:G/H\to G/H$ defined  by $f(x)= x \rhd^\circ_sp(v)$ is bijective.   First, we prove the surjectivity. 
    Denote the bijection $u\mapsto u \rhd_s v:G\to G$ by~$F$.
     For any $w\in G$, we have $$f (p(F^{-1} (w)) )= p(F^{-1} (w) ) \rhd^\circ_s  p(v)=p( F^{-1} (w) \rhd_s  v)=p( FF^{-1} (w) )=p(w).$$ Therefore $p(w)\in {\rm {Im}} f$ and~$f$ is surjective. Suppose now that two elements $x_1,x_2$ of $G/H$ satisfy $f(x_1)=f(x_2)$. 
     For $i=1,2$, pick any $w_i\in p^{-1}(x_i)\subset G$.
     Then $$p(w_i \rhd_s v)= p(w_i)\rhd^\circ_s p(v)=x_i \rhd^\circ_s p(v)= f(x_i).$$ 
     The equality $f(x_1)=f(x_2)$ implies that the elements $\{w_i \rhd_s v\}_{i=1,2}$ of~$G$ lie in the same left coset of~$H$.  By the definition of $\rhd_s$, this means that the elements $\{vsv^{-1}w_i \}_{i=1,2}$ of~$G$ lie in the same left coset of~$H$. Consequently, $w_1, w_2\in G$ lie in the same left coset of~$H$. Therefore $x_1=p(w_1)=p(w_2)=x_2$. We conclude that the mapping~$f$ is injective. This fact and the surjectivity of~$f$ implies that~$f$ is a bijection. This completes the proof of the non-degeneracy of $\rhd^\circ_s$ and  of the theorem. 
     \end{proof}

  \section{Multi-quandles in topology}\label{gttoppmrrbrr}

Consider a pair $(X,Y \subset X)$ of non-empty path-connected topological  spaces. For any  base point $x\in X$, we define a structure of a multi-quandle in the set 
 $\pi(X,Y,x)$ of $Y$-homotopy classes of $Y$-paths leading from~$x$ to~$Y$, as defined in the introduction. We also explain that our construction includes the standard knot quandles as a special case.
 
  \subsection{The case $x\in Y$}\label{cosetgmgennrexa} The case where $x\in Y$ is  simpler and we start with it.
  Set $G=\pi_1(X,x)$ and let   $H\subset G$ be the image of the inclusion homomorphism
$ \pi_1(Y,x) \to \pi_1(X, x)=G$. 
   We  first construct a bijection $G/H\to \pi(X,Y,x)$. 
 Since $x\in Y$, any path in~$X$ starting and ending in~$x$ is a $Y$-path. Considering the homotopy  classes of such paths  we obtain a  mapping $P:G\to \pi(X,Y,x)$. The path-connectedness of~$Y$ implies that the mapping~$P$ is surjective. Our definitions ensure that~$P$
 is a composition of the projection $p:G\to G/H$ and a mapping $G/H\to \pi(X, Y, x)$. The latter mapping is  bijective: its inverse  is obtained by assigning to any $Y$-path $\gamma:[0,1]\to X$  the left coset $p([\gamma \beta])\in G/H$  where $\beta$ is any path in~$Y$  from  $\gamma(1)\in Y$ to~$x$ and $[\gamma \beta]\in G$ is the homotopy class of the loop $\gamma \beta$.
 
Theorem~\ref{sDuctet} yields a structure of a multi-quandle in  $G/H$.   Transporting this  structure along the bijection $G/H\to \pi(X, Y, x)$ we obtain a structure of a multi-quandle in the set $\pi(X, Y, x)$ with binary operations numerated by elements of the center of the group~$H$.

   \subsection{The general case}\label{cosetgmgennrexa} 
Suppose now that~$x$ is an arbitrary point of~$X$. Fix a path~$\alpha$ in~$X$ leading from~$x$ to a point $y\in Y$. This path determines a bijection $\pi(X, Y,y) \to \pi(X,Y,x)$ carrying the $Y$-homotopy class of any path~$\rho$ from~$y$ to~$Y$ into the $Y$-homotopy class of the product path $\alpha\rho$ from~$x$ to~$Y$. Transporting the multi-quandle  structure in $\pi(X,Y,y)$  constructed above to $\pi(X, Y, x)$ along this bijection we obtain a multi-quandle structure in  $\pi(X, Y, x)$. 
Generally speaking, the latter structure  depends on the choice of the path~$\alpha$. However, the multi-quandles associated with different $x,y,\alpha$ are  isomorphic (exercise). We  denote the  isomorphism class of these multi-quandles  by $Q(X,Y)$.

  \subsection{The knot quandles}\label{cokcnnrexa} Consider a path-connected manifold~$M$ of dimension $\geq 2$ and its  codimension 2 path-connected submanifold $K \subset {\rm{Int}}(M)$. We assume that  the normal bundle of~$K$ in~$M$ is   trivial and oriented. Let $E=E_K$ be the exterior of~$K$ in~$M$. The manifold $\partial E$ consists of $\partial M$ and the product $K\times S^1$ bounding  a regular neighborhood of~$K$ in~$M$. Applying  the constructions above to the pair $E\supset  K\times S^1$, we obtain a multi-quandle $Q=Q(E,K\times S^1)$. The  binary operations in~$Q$ are numerated by the elements of the  abelian group $Z(H)$ where~$H$ is the image of the inclusion homomorphism $\pi_1(K\times S^1) \to \pi_1(E)$.  The group $Z(H)$ has a distinguished element represented by the meridian circle of~$K$. Keeping only the corresponding binary operation in $Q$ (and forgetting the binary operations corresponding to all other elements of $Z(H)$), we obtain  the Joyce-Matveev quandle of the pair $(M,K)$. This construction applies in particular to knots in $S^3$ and yields the Joyce-Matveev quandles of knots.

    \end{document}